 \newtheorem{thm}{Theorem}[section]
 \newtheorem{lemma}[thm]{Lemma}
 \theoremstyle{definition}
 \newtheorem{definition}{Definition}[section]
 \theoremstyle{remark}
 \newtheorem{example}{Example}
 \numberwithin{equation}{section}
\begin{document}
\begin{center}
\begin{title}
\title{\bf\Large{{A Remark on the Memory Property of Fractional Difference Operators}}}
\end{title}

\vskip 0.18 in

\begin{author}
\author {Jagan Mohan Jonnalagadda\footnote[1]{Department of Mathematics, Birla Institute of Technology and Science Pilani, Hyderabad - 500078, Telangana, India. email: {j.jaganmohan@hotmail.com}}}
\end{author}
\end{center}

\vskip 0.18 in

\noindent{\bf Abstract:} Fractional difference operators possess nonlocal structure which largely affects and complicates the qualitative analysis of fractional difference equations. In this article, we discuss the effect of this memory property on asymptotic behaviour of solutions of nabla fractional difference equations.

\vskip 0.18 in

\noindent{\bf Key Words:} Nabla fractional difference, nonlocal, memory, solution, asymptotic behaviour.

\vskip 0.2 in

\noindent{\bf AMS Classification:} 26A33, 34A08, 39A12, 39A70.

\vskip 0.2 in

\section{Introduction \& Preliminaries}
The notions of fractional integrals and derivatives \cite{Ki, Po} date back to the works of Euler but the concepts of nabla fractional sum and differences \cite{Go} are just a decade old. Nabla fractional calculus also represents a natural instrument to model nonlocal phenomena either in space or time. The nabla fractional sum or difference of any function contains information about the function at earlier points, so it possesses a long memory effect. From science to engineering, there are many processes that involve different space or time scales. In many problems of this context, the dynamics of the system can be formulated by fractional difference equations which include the nonlocal effects.

Throughout the article, we shall use the following notations, definitions, and known results of nabla fractional calculus \cite{Go}. For any real number $a$, denote by $\mathbb{N}_{a}  = \{a, a + 1, a + 2, \ldots\}$. The backward jump operator $\rho : \mathbb{N}_{a + 1} \rightarrow \mathbb{N}_{a}$ is defined by $$\rho(t) = t - 1, \quad t \in \mathbb{N}_{a + 1}.$$ Define the $\mu^{th}$-order nabla fractional Taylor monomial by $$H_{\mu}(t, a) = \frac{(t - a)^{\overline{\mu}}}{\Gamma(\mu + 1)} = \frac{\Gamma(t - a + \mu)}{\Gamma(t - a)\Gamma(\mu + 1)}, \quad t \in \mathbb{N}_{a}, \quad \mu \in \mathbb{R} \setminus \{\ldots, -2, -1\},$$ provided the right-hand side exists. Here $\Gamma(\cdot)$ denotes the Euler gamma function. Observe that $H_{\mu}(a, a) = 0$ and $H_{\mu}(t, a) = 0$ for all $\mu \in \{\ldots, -2, -1\}$ and $t \in \mathbb{N}_{a}$. From \cite{Ji}, we have that 
\begin{equation} \label{As}
\lim_{t \rightarrow \infty} H_{\mu - 1}(t, a) = 0, \quad 0 < \mu < 1.
\end{equation}
Let $u: \mathbb{N}_{a} \rightarrow \mathbb{R}$ and $N \in \mathbb{N}_1$. The first order backward (nabla) difference of $u$ is defined by $$\big{(}\nabla u\big{)}(t) = u(t) - u(t - 1), \quad t \in \mathbb{N}_{a + 1},$$ and the $N^{th}$-order nabla difference of $u$ is defined recursively by $$\big{(}{\nabla}^{N}u\big{)}(t) = \Big{(}\nabla\big{(}\nabla^{N - 1}u\big{)}\Big{)}(t), \quad t\in \mathbb{N}_{a + N}.$$

\begin{definition}[See \cite{Go}] \label{RL S}
Let $u: \mathbb{N}_{a + 1} \rightarrow \mathbb{R}$ and $\nu > 0$. The $\nu^{\text{th}}$-order nabla sum of $u$ based at $a$ is given by
\begin{equation}
\nonumber \big{(}\nabla ^{-\nu}_{a}u\big{)}(t) = \sum^{t}_{s = a + 1}H_{\nu - 1}(t, \rho(s))u(s), \quad t \in \mathbb{N}_{a},
\end{equation}
where by convention $\big{(}\nabla ^{-\nu}_{a}u\big{)}(a) = 0$ and $\nabla ^{-0}_{a}u = u$. 
\end{definition}

\begin{definition}[See \cite{Go}] \label{RL D}
Let $u: \mathbb{N}_{a + 1} \rightarrow \mathbb{R}$, $\nu > 0$ and choose $N \in \mathbb{N}_1$ such that $N - 1 < \nu \leq N$. The $\nu^{\text{th}}$-order {\it Riemann--Liouville} nabla difference of $u$ based at $a$ is given by
\begin{equation}
\nonumber \big{(}\nabla ^{\nu}_{a}u\big{)}(t) = \Big{(}\nabla^N\big{(}\nabla_{a}^{-(N - \nu)}u\big{)}\Big{)}(t), \quad t\in\mathbb{N}_{a + N}.
\end{equation}
\end{definition}

\begin{thm}[See \cite{Ah}] \label{Ah}
Let $u: \mathbb{N}_{a + 1} \rightarrow \mathbb{R}$, $\nu > 0$ and choose $N \in \mathbb{N}_1$ such that $N - 1 < \nu < N$. Then
\begin{equation}
\nonumber \big{(}\nabla ^{\nu}_{a}u\big{)}(t) = \sum^{t}_{s = a + 1}H_{-\nu - 1}(t, \rho(s))u(s), \quad t\in\mathbb{N}_{a + 1}.
\end{equation}
\end{thm}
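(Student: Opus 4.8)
The plan is to begin from Definition \ref{RL D}, unfolding $\big(\nabla^\nu_a u\big)(t) = \big(\nabla^N(\nabla_a^{-(N-\nu)}u)\big)(t)$, and to substitute the explicit sum from Definition \ref{RL S}, namely $\big(\nabla_a^{-(N-\nu)}u\big)(t) = \sum_{s=a+1}^{t} H_{N-\nu-1}(t,\rho(s))u(s)$. The theorem then reduces to understanding how the backward difference $\nabla$ acts on a sum of the form $v_\mu(t) := \sum_{s=a+1}^{t} H_\mu(t,\rho(s))u(s)$, whose upper limit \emph{and} summand both carry the free variable $t$. Two elementary facts about the Taylor monomial will drive everything: first, the power rule $\nabla_t H_\mu(t,s) = H_{\mu-1}(t,s)$, which I would obtain by a one-line manipulation of $H_\mu(t,s) = \Gamma(t-s+\mu)/[\Gamma(t-s)\Gamma(\mu+1)]$ using $\Gamma(x+1)=x\Gamma(x)$; and second, the boundary value $H_\mu(t,\rho(t)) = H_\mu(t,t-1) = \Gamma(1+\mu)/[\Gamma(1)\Gamma(\mu+1)] = 1$, valid for every admissible order. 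Since $N-1<\nu<N$ is strict, $\nu$ is not an integer, so all the intermediate orders $N-\nu-1,\,N-\nu-2,\,\dots,\,-\nu-1$ are non-integral and each monomial is well defined.

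The heart of the argument is the single-step identity $\nabla v_\mu = v_{\mu-1}$. To prove it I would write $\big(\nabla v_\mu\big)(t) = v_\mu(t) - v_\mu(t-1)$, peel off the top term $s=t$ of $v_\mu(t)$, and recombine the remaining range $a+1\le s\le t-1$ as a term-by-term difference. The peeled term is $H_\mu(t,\rho(t))u(t)=u(t)$ by the boundary value, while the surviving sum becomes $\sum_{s=a+1}^{t-1}\big[\nabla_t H_\mu(t,\rho(s))\big]u(s) = \sum_{s=a+1}^{t-1} H_{\mu-1}(t,\rho(s))u(s)$ by the power rule. Completing this to the full sum $v_{\mu-1}(t)$ reintroduces the term $H_{\mu-1}(t,\rho(t))u(t)=u(t)$, so the extra boundary contribution $u(t)$ cancels against it exactly, leaving $\big(\nabla v_\mu\big)(t) = v_{\mu-1}(t)$. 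This cancellation of the boundary term against the freshly created top term of $v_{\mu-1}$ is the step I expect to be the main obstacle: it is the one genuine computation, and one must also verify it persists at the left endpoint $t=a+1$, where the sums collapse to a single term and where one checks directly that $\big(\nabla v_\mu\big)(a+1)=H_\mu(a+1,a)u(a+1)=u(a+1)=v_{\mu-1}(a+1)$.

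With the single-step identity established, the conclusion follows by applying it $N$ times: beginning at $\mu=N-\nu-1$ and lowering the order by one at each application, $\nabla^N v_{N-\nu-1} = v_{N-\nu-1-N} = v_{-\nu-1}$, which is precisely the asserted $\sum_{s=a+1}^{t} H_{-\nu-1}(t,\rho(s))u(s)$. The final point requiring care is the domain: Definition \ref{RL D} a priori produces $\big(\nabla^\nu_a u\big)(t)$ only for $t\in\mathbb{N}_{a+N}$, whereas the claim is stated on $\mathbb{N}_{a+1}$. I would resolve this by noting that each intermediate $v_\mu$ is itself a well-defined function on all of $\mathbb{N}_a$ (with $v_\mu(a)=0$) and that the single-step identity holds for every $t\in\mathbb{N}_{a+1}$; hence, although the generic composition $\nabla^N$ would raise the base to $a+N$, the special structure of these sums lets each difference extend verbatim down to $\mathbb{N}_{a+1}$. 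Consequently $v_{-\nu-1}$ furnishes the natural continuation of $\nabla^\nu_a u$ and the identity holds on the full set $\mathbb{N}_{a+1}$, as stated.
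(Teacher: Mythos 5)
The paper offers no proof of this theorem at all: it is imported verbatim from the reference \cite{Ah}, so there is nothing internal to compare your argument against. Your derivation from Definitions \ref{RL S} and \ref{RL D} is correct and self-contained. The single-step identity $\nabla v_\mu = v_{\mu-1}$ is the right engine: peeling off the $s=t$ term gives $H_\mu(t,\rho(t))u(t)=u(t)$, the remaining range transforms term by term via the power rule $\nabla_t H_\mu(t,s)=H_{\mu-1}(t,s)$ (a one-line Gamma-function computation, as you say), and the peeled $u(t)$ is exactly the top term $H_{\mu-1}(t,\rho(t))u(t)$ needed to complete $v_{\mu-1}(t)$ --- strictly an absorption rather than a cancellation, but the arithmetic is right. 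Iterating $N$ times from $\mu=N-\nu-1$ lands on $v_{-\nu-1}$, and the strict inequality $N-1<\nu<N$ keeps every intermediate order non-integral so all monomials are defined. Your closing remark about the domain is the one place to be careful: for $N\ge 2$ the composition $\nabla^N(\nabla_a^{-(N-\nu)}u)$ literally requires values of the inner sum below $a$ when $t<a+N$, so the identity on all of $\mathbb{N}_{a+1}$ really is a statement about the natural extension (empty sums set to zero), exactly as you frame it; since the paper only ever invokes the theorem with $0<\nu<1$, i.e.\ $N=1$, this caveat is harmless there. In short: a correct proof supplied where the paper supplies none.
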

Clearly, $\big{(}\nabla ^{\nu}_{a}u\big{)}(t)$ depends on the values of $u$ at the points $a + 1$, $a + 2$, $\dots$, $t$ unlike the first order nabla difference $\big{(}\nabla u\big{)}(t)$, which just depends on the values of $u$ at the points $t - 1$ and $t$ only. Hereafter, we say that the memory of $\big{(}\nabla ^{\nu}_{a}u\big{)}(t)$ is $(t - a)$.

\begin{lemma} \cite{Go} \label{Power}
Let $\nu$, $\mu \in \mathbb{R}$ and $n \in \mathbb{N}_{1}$ such that $N - 1 < \nu < N$. Then, $$\nabla^{\nu}_{a}H_{\mu}(t, a) = H_{\mu - \nu}(t, a), \quad t \in \mathbb{N}_{a + 1}.$$
\end{lemma}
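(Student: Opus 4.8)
The plan is to evaluate $\nabla^{\nu}_{a}H_{\mu}(t,a)$ directly from the convolution representation in Theorem~\ref{Ah} and to collapse the resulting finite sum by means of a classical Vandermonde-type convolution. Since $N-1<\nu<N$ (strict), Theorem~\ref{Ah} applies to $u(t)=H_{\mu}(t,a)$ and yields, for $t\in\mathbb{N}_{a+1}$,
\[
\big(\nabla^{\nu}_{a}H_{\mu}(\cdot,a)\big)(t) = \sum_{s=a+1}^{t} H_{-\nu-1}(t,\rho(s))\,H_{\mu}(s,a).
\]
First I would substitute the explicit gamma-function forms. Writing $n=t-a\in\mathbb{N}_{1}$ and $k=s-a$, and using $\rho(s)=s-1$, one has $H_{-\nu-1}(t,\rho(s))=\Gamma(n-k-\nu)/\big[\Gamma(n-k+1)\Gamma(-\nu)\big]$ and $H_{\mu}(s,a)=\Gamma(k+\mu)/\big[\Gamma(k)\Gamma(\mu+1)\big]$, so the right-hand side is a finite convolution sum in $k=1,\dots,n$.

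Next I would rewrite each factor as a generalized binomial coefficient, namely $H_{\mu}(s,a)=\binom{k+\mu-1}{k-1}$ and $H_{-\nu-1}(t,\rho(s))=\binom{n-k-\nu-1}{\,n-k\,}$, and shift the summation index by $j=k-1$. The claim then reduces to the Vandermonde-type convolution
\[
\sum_{j=0}^{n-1}\binom{j+\mu}{j}\binom{(n-1)-j-\nu-1}{(n-1)-j} = \binom{(n-1)+\mu-\nu}{\,n-1\,}.
\]
I would prove this by comparing the coefficient of $x^{\,n-1}$ on both sides of
\[
(1-x)^{-(\mu+1)}\,(1-x)^{\nu} = (1-x)^{-(\mu-\nu+1)}, \qquad |x|<1,
\]
where the generalized binomial theorem $(1-x)^{-c}=\sum_{m\ge 0}\binom{m+c-1}{m}x^{m}$ (valid for every real exponent $c$) identifies the two families of binomial coefficients as the Taylor coefficients of $(1-x)^{-(\mu+1)}$ and $(1-x)^{\nu}=(1-x)^{-(-\nu)}$. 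The left-hand sum is exactly the Cauchy product of these two absolutely convergent series, so matching the coefficient of $x^{\,n-1}$ gives the identity.

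Finally, translating back via $\binom{(n-1)+\mu-\nu}{\,n-1\,}=\Gamma(n+\mu-\nu)/\big[\Gamma(n)\Gamma(\mu-\nu+1)\big]=H_{\mu-\nu}(t,a)$ delivers the asserted power rule. I expect the sole genuine obstacle to be the closed-form evaluation of the convolution sum: the passage to gamma functions and to binomial coefficients is routine bookkeeping, whereas this step is where the special structure of the Taylor monomials is actually used. The generating-function argument treats arbitrary real $\nu,\mu$ uniformly, the hypothesis $N-1<\nu<N$ serving only to guarantee, through Theorem~\ref{Ah}, that $-\nu-1$ is not a negative integer and that the convolution representation is legitimate on $\mathbb{N}_{a+1}$. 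As a purely discrete alternative, one could instead establish the same convolution identity by induction on $n$, using $H_{\gamma}(t,\rho(t))=1$ together with the first-order rule $\big(\nabla H_{\gamma}(\cdot,b)\big)(t)=H_{\gamma-1}(t,b)$ to set up the recursion; the Vandermonde route is cleaner and I would adopt it.
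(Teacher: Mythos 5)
Your argument is correct, and there is nothing in the paper to compare it against: Lemma \ref{Power} is quoted from Goodrich--Peterson without proof, so your write-up is a genuine addition rather than a variant. The chain of reductions checks out: Theorem \ref{Ah} (which applies precisely because $N-1<\nu<N$ is strict) turns $\nabla^{\nu}_{a}H_{\mu}(\cdot,a)$ into the convolution $\sum_{k=1}^{n}\frac{\Gamma(n-k-\nu)}{\Gamma(n-k+1)\Gamma(-\nu)}\cdot\frac{\Gamma(k+\mu)}{\Gamma(k)\Gamma(\mu+1)}$ with $n=t-a$, your binomial rewriting is exact, and the Vandermonde--Chu identity you invoke is precisely the coefficient of $x^{n-1}$ in $(1-x)^{\nu}(1-x)^{-(\mu+1)}=(1-x)^{-(\mu-\nu+1)}$; a spot check at $n=2$ gives $-\nu+(1+\mu)=\binom{1+\mu-\nu}{1}$ as required. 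This differs from the usual textbook route, which first proves the power rule for the fractional \emph{sum} $\nabla^{-\nu}_{a}$ (by induction on $t$ or by the nabla Laplace transform) and then obtains the difference version by applying $\nabla^{N}$ to $\nabla^{-(N-\nu)}_{a}H_{\mu}(\cdot,a)=H_{\mu+N-\nu}(\cdot,a)$ and using $\nabla H_{\gamma}(\cdot,a)=H_{\gamma-1}(\cdot,a)$ repeatedly; your single-convolution argument is shorter and avoids the composition step entirely, at the cost of importing the generating-function machinery. Two small caveats worth recording: the identity of power-series coefficients is really a polynomial identity in $\mu$ and $\nu$, so it is legitimate for all real parameters, but the lemma as stated in the paper silently excludes $\mu\in\{-1,-2,\dots\}$ (where $H_{\mu}\equiv 0$ by convention and the conclusion can fail); your proof, like the paper's application with $\mu=\nu-1\in(-1,0)$, is unaffected by this edge case, but you should state the restriction.
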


\section{Examples}
The memory property of nabla fractional difference operators play an important role in describing the behaviour of solutions of the corresponding nabla fractional difference equations. We demonstrate this fact through the following examples.
\begin{example}
Assume $0 < \nu, \mu < 1$. Consider the first order and the $\nu^{\text{th}}$-order nabla difference equations
\begin{align}
& \big{(}\nabla u\big{)}(t) = H_{\mu - 1}(t, \rho(a)), \quad t \in \mathbb{N}_{a + 1}, \label{D 31} \\
& \big{(}\nabla^{\nu}_{\rho(a)} u\big{)}(t) = H_{\mu - 1}(t, \rho(a)), \quad t \in \mathbb{N}_{a + 1}. \label{D 32}
\end{align} 
For $u_{0} \in \mathbb{R}$, every solution of \eqref{D 31} is of the form
\begin{equation}
u(t, a, u_{0}) = u_{0} + H_{\mu}(t, \rho(a)), \quad t \in \mathbb{N}_{a},
\end{equation}
which tends to $\infty$ as $t \rightarrow \infty$ \cite{Bo, El}. Every solution of \eqref{D 32} is of the form
\begin{equation}
u(t, a, u_{0}) = u_{0} H_{\nu - 1}(t, \rho(a)) + H_{\nu + \mu - 1}(t, \rho(a)), \quad t \in \mathbb{N}_{a}.
\end{equation}
Assume $0 < \nu + \mu < 1$. Then, from \eqref{As}, every solution of \eqref{D 32} tends to 0 as $t \rightarrow \infty$. 
\end{example}

\begin{example}
Assume $0 < \nu < 1$, $c : \mathbb{N}_{a + 1} \rightarrow \mathbb{R}$ such that $1 - c(t) \neq 0$ for $t \in \mathbb{N}_{a + 1}$. Consider the first order and the $\nu^{\text{th}}$-order nabla difference equations
\begin{align}
& \big{(}\nabla u\big{)}(t) = c(t) u(t), \quad t \in \mathbb{N}_{a + 1}, \label{D 51} \\
& \big{(}\nabla^{\nu}_{\rho(a)} u\big{)}(t) = c(t) u(t), \quad t \in \mathbb{N}_{a + 1}. \label{D 52}
\end{align} 
From \cite{Bo, El}, we know that every solution of \eqref{D 51} tends to $0$ as $t \rightarrow \infty$ if $$\lim_{t \rightarrow \infty}\left|\prod^{t}_{s = a + 1}\left[\frac{1}{1 - c(s)}\right]\right| = 0.$$ Also, it follows from \cite{Wu H} that every solution of \eqref{D 52} tends to 0 as $t \rightarrow \infty$ if $\left|1 - c(t)\right| \geq 1$ for $t \in \mathbb{N}_{a + 1}$. For example, if we take $c(t) = 2$ for $t \in \mathbb{N}_{1}$, every solution of $$\big{(}\nabla u\big{)}(t) = 2 u(t), \quad t \in \mathbb{N}_{1},$$ doesn't tends to zero as $t \rightarrow \infty$ whereas every solution of $$\big{(}\nabla^{\nu}_{\rho(0)} u\big{)}(t) = 2 u(t), \quad t \in \mathbb{N}_{1},$$ tends to 0 as $t \rightarrow \infty$. 
\end{example}

Motivated by these facts, in this article, we study the effect of this memory property on asymptotic behaviour of solutions of the following nabla fractional difference equation:
\begin{equation}
\big{(}\nabla^{\nu}_{\rho(a)} u\big{)}(t) = c(t) u(t - 1), \quad t \in \mathbb{N}_{a + 1}, \label{L 1} 
\end{equation} 
where $c : \mathbb{N}_{a + 1} \rightarrow \mathbb{R}$ and $0 < \nu < 1$. Finally, we compare these properties with that of the following first order nabla difference equation:
\begin{equation}
\big{(}\nabla u\big{)}(t) = c(t) u(t - 1), \quad t \in \mathbb{N}_{a + 1}. \label{L 2} 
\end{equation} 

\section{Main Results}
Define a sequence recursively as follows: $\tilde{E}_{c, \nu}(a, a) = 1$ and 
\begin{equation} \label{E}
\tilde{E}_{c, \nu}(t, a) = c(t) \tilde{E}_{c, \nu}(t - 1, a) - \sum^{t - 1}_{s = a}H_{-\nu - 1}(t, \rho(s))\tilde{E}_{c, \nu}(s, a), \quad t\in\mathbb{N}_{a + 1}.
\end{equation}

\begin{thm} \label{T 1}
Assume $c : \mathbb{N}_{a + 1} \rightarrow \mathbb{R}$ and $N - 1 < \nu < N$, $N \in \mathbb{N}_{1}$. Then, the unique solution of the nabla fractional initial value problem
\begin{equation} \label{IVP}
\begin{cases}
\big{(}\nabla^{\nu}_{\rho(a)} u\big{)}(t) = c(t) u(t - 1), \quad t \in \mathbb{N}_{a + 1}, \\
u(a) \in \mathbb{R},
\end{cases}
\end{equation}
is given by
\begin{equation} \label{Sol}
u(t) = u(a) \tilde{E}_{c, \nu}(t, a), \quad t \in \mathbb{N}_{a},
\end{equation}
where $\tilde{E}_{c, \nu}(t, a)$ is defined by \eqref{E}.
\end{thm}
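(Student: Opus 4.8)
The plan is to reduce the fractional equation in \eqref{IVP} to an explicit one-step recursion and then to recognize that this recursion is precisely the one defining $\tilde{E}_{c, \nu}$ in \eqref{E}. First I would apply Theorem \ref{Ah} with the base point $a$ replaced by $\rho(a)$, which is legitimate since $N - 1 < \nu < N$. Because $\rho(a) + 1 = a$, this rewrites the left-hand side of \eqref{IVP} as
$$\big{(}\nabla^{\nu}_{\rho(a)} u\big{)}(t) = \sum_{s = \rho(a) + 1}^{t} H_{-\nu - 1}(t, \rho(s)) u(s) = \sum_{s = a}^{t} H_{-\nu - 1}(t, \rho(s)) u(s), \quad t \in \mathbb{N}_{a + 1}.$$

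The key computation is the evaluation of the top ($s = t$) term of this sum. Using the gamma-function form of $H_{\mu}$ together with $t - \rho(t) = 1$, I would show
$$H_{-\nu - 1}(t, \rho(t)) = \frac{\Gamma\big{(}1 + (-\nu - 1)\big{)}}{\Gamma(1)\,\Gamma(-\nu)} = \frac{\Gamma(-\nu)}{\Gamma(1)\,\Gamma(-\nu)} = 1.$$
This is the crux of the argument: it lets me peel $u(t)$ off from the remaining sum. Substituting into the equation $\big{(}\nabla^{\nu}_{\rho(a)} u\big{)}(t) = c(t) u(t - 1)$ and solving for $u(t)$ then yields the explicit recursion
$$u(t) = c(t) u(t - 1) - \sum_{s = a}^{t - 1} H_{-\nu - 1}(t, \rho(s)) u(s), \quad t \in \mathbb{N}_{a + 1}.$$

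With this recursion in hand, I would conclude by strong induction on $t \in \mathbb{N}_{a}$, establishing existence, uniqueness, and the closed form \eqref{Sol} simultaneously. For the base case, $u(a) \tilde{E}_{c, \nu}(a, a) = u(a) \cdot 1 = u(a)$. For the inductive step, assuming $u(s) = u(a) \tilde{E}_{c, \nu}(s, a)$ for all $a \leq s \leq t - 1$, I would substitute these values into the displayed recursion and compare the result with the defining relation \eqref{E}; by linearity the common factor $u(a)$ pulls out, giving $u(t) = u(a) \tilde{E}_{c, \nu}(t, a)$. Since the recursion determines $u(t)$ uniquely from the prescribed value $u(a)$ and the previously computed terms, this at once shows that \eqref{IVP} possesses one and only one solution, namely \eqref{Sol}.

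I do not anticipate a serious obstacle. The only delicate points are the shift of the base point from $a$ to $\rho(a)$, which is what makes the summation index begin at $s = a$ rather than $s = a + 1$ and thereby match \eqref{E}, and the evaluation $H_{-\nu - 1}(t, \rho(t)) = 1$ that decouples $u(t)$. Everything downstream is a formal rearrangement and a routine induction.
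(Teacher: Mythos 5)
Your proposal is correct and follows essentially the same route as the paper: rewrite $\nabla^{\nu}_{\rho(a)}u$ via Theorem \ref{Ah} as a sum starting at $s = a$, peel off the $s = t$ term (which the paper does implicitly and you justify explicitly via $H_{-\nu-1}(t,\rho(t)) = 1$), obtain the one-step recursion $u(t) = c(t)u(t-1) - \sum_{s=a}^{t-1}H_{-\nu-1}(t,\rho(s))u(s)$, and conclude by strong induction that it matches the recursion \eqref{E} defining $\tilde{E}_{c,\nu}$, with uniqueness following from the method of steps. No gaps.
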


\begin{proof}
From Theorem \ref{Ah}, we have
\begin{align} \label{1}
\nonumber \big{(}\nabla ^{\nu}_{\rho(a)}u\big{)}(t) & = \sum^{t}_{s = a}H_{-\nu - 1}(t, \rho(s))u(s) \\ & = \sum^{t - 1}_{s = a}H_{-\nu - 1}(t, \rho(s))u(s) + u(t), \quad t \in \mathbb{N}_{a + 1}.
\end{align}
Then, it follows from \eqref{IVP} and \eqref{1} that
\begin{equation} \label{2}
u(t) = c(t) u(t - 1) - \sum^{t - 1}_{s = a}H_{-\nu - 1}(t, \rho(s))u(s), \quad t \in \mathbb{N}_{a + 1}.
\end{equation}
Take $t = a + n$, $n \in \mathbb{N}_{1}$. From \eqref{Sol}, it is enough to prove that the statement 
\begin{equation} \label{Sol 1}
u(a + n) = u(a) \tilde{E}_{c, \nu}(a + n, a), \quad n \in \mathbb{N}_{1},
\end{equation}
is true by using the principle of strong mathematical induction on $n$. From \eqref{E}, \eqref{1} and \eqref{2}, we obtain
\begin{align*}
u(a + 1) & = c(a + 1) u(a) - \sum^{a}_{s = a}H_{-\nu - 1}(a + 1, \rho(s))u(s) \\ & = u(a) \left[c(a + 1) - \sum^{a}_{s = a}H_{-\nu - 1}(a + 1, \rho(s)) \tilde{E}_{c, \nu}(s, a)\right] \\ & = u(a) \left[c(a + 1)\tilde{E}_{c, \nu}(a, a) - \sum^{a}_{s = a}H_{-\nu - 1}(a + 1, \rho(s)) \tilde{E}_{c, \nu}(s, a)\right] \\ & = u(a)\left[c(a + 1) \tilde{E}_{c, \nu}(a + 1 - 1, a) - \sum^{a + 1 - 1}_{s = a}H_{-\nu - 1}(a + 1, \rho(s))\tilde{E}_{c, \nu}(s, a)\right] \\ & = u(a) \tilde{E}_{c, \nu}(a + 1, a).
\end{align*}
Thus, the statement \eqref{Sol 1} is true for $n = 1$. Again, from \eqref{E}, \eqref{1} and \eqref{2}, we obtain
\begin{align*}
u(a + 2) & = c(a + 2) u(a + 1) - \sum^{a + 1}_{s = a}H_{-\nu - 1}(a + 2, \rho(s))u(s) \\ & = u(a) \left[c(a + 2) \tilde{E}_{c, \nu}(a + 1, a) - \sum^{a + 1}_{s = a}H_{-\nu - 1}(a + 2, \rho(s))\tilde{E}_{c, \nu}(s, a)\right] \\ & = u(a)\left[c(a + 2) \tilde{E}_{c, \nu}(a + 2 - 1, a) - \sum^{a + 2 - 1}_{s = a}H_{-\nu - 1}(a + 2, \rho(s))\tilde{E}_{c, \nu}(s, a)\right] \\ & = u(a) \tilde{E}_{c, \nu}(a + 2, a).
\end{align*}
Thus, the statement \eqref{Sol 1} is true for $n = 2$. Assume the statement \eqref{Sol 1} is true for $k = 3, 4, \cdots, n - 1$. That is, 
\begin{equation} \label{Sol 2}
u(a + k) = u(a) \tilde{E}_{c, \nu}(a + k, a), \quad k \in \mathbb{N}^{n - 1}_{1}.
\end{equation}
Now, we prove the statement \eqref{Sol 1} is true for $n$. It follows from \eqref{E}, \eqref{1} and \eqref{2} that
\begin{align*}
u(a + n) & = c(a + n) u(a + n - 1) - \sum^{a + n - 1}_{s = a}H_{-\nu - 1}(a + n, \rho(s))u(s) \\ & = u(a)\left[c(a + n) \tilde{E}_{c, \nu}(a + n - 1, a) - \sum^{a + n - 1}_{s = a}H_{-\nu - 1}(a + n, \rho(s))\tilde{E}_{c, \nu}(s, a)\right] \\ & = u(a) \tilde{E}_{c, \nu}(a + n, a).
\end{align*}
Hence, by the principle of strong mathematical induction on $n$, the statement \eqref{Sol 1} is true for each $n \in \mathbb{N}_{1}$. Since we obtain the solution by the method of steps, $$u(t) = u(a) \tilde{E}_{c, \nu}(t, a), \quad t \in \mathbb{N}_{a},$$ is the unique solution of the initial value problem \eqref{IVP}. The proof is complete.
\end{proof}

\begin{thm} \label{T 2}
Assume $0 < \nu < 1$ and $\left|c(t) + \nu\right| \leq \nu$ for $t \in \mathbb{N}_{a + 1}$. Then, the solutions of 
\begin{equation}
\big{(}\nabla^{\nu}_{\rho(a)} u\big{)}(t) = c(t) u(t - 1), \quad t \in \mathbb{N}_{a + 1},
\end{equation}
satisfy
\begin{equation}
\lim_{t \rightarrow \infty}u(t) = 0.
\end{equation}
\end{thm}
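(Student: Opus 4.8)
The plan is to reduce everything to the asymptotics of the sequence $\tilde{E}_{c,\nu}(t,a)$ and then to dominate it by an explicit monomial that already decays by \eqref{As}. By Theorem \ref{T 1} the solution is $u(t)=u(a)\tilde{E}_{c,\nu}(t,a)$, so it suffices to prove that $v(t):=\tilde{E}_{c,\nu}(t,a)\to 0$ as $t\to\infty$. First I would record two elementary facts about the kernel in \eqref{E}. For $a\le s\le t-1$ one has $t-\rho(s)=t-s+1\ge 2$, so $\Gamma(t-s-\nu)>0$ and $\Gamma(t-s+1)>0$, while $\Gamma(-\nu)<0$; hence $H_{-\nu-1}(t,\rho(s))=\Gamma(t-s-\nu)/[\Gamma(t-s+1)\Gamma(-\nu)]<0$, i.e. $-H_{-\nu-1}(t,\rho(s))>0$. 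A direct evaluation at $s=t-1$ gives $-H_{-\nu-1}(t,\rho(t-1))=-\Gamma(1-\nu)/[\Gamma(2)\Gamma(-\nu)]=\nu$. Peeling this $s=t-1$ term off the sum in \eqref{E} lets me rewrite the recursion as
\[
v(t)=\bigl(c(t)+\nu\bigr)v(t-1)+\sum_{s=a}^{t-2}\bigl(-H_{-\nu-1}(t,\rho(s))\bigr)v(s),
\]
which is exactly where the hypothesis $|c(t)+\nu|\le\nu$ enters: the coefficient of $v(t-1)$ is now bounded by $\nu$, while every remaining coefficient is positive.

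Taking absolute values and using $|c(t)+\nu|\le\nu$ together with the positivity above yields the convolution-type inequality
\[
|v(t)|\le\nu\,|v(t-1)|+\sum_{s=a}^{t-2}\bigl(-H_{-\nu-1}(t,\rho(s))\bigr)|v(s)|=\sum_{s=a}^{t-1}\bigl(-H_{-\nu-1}(t,\rho(s))\bigr)|v(s)|.
\]
The decisive step is to identify the right comparison function, and I claim one can take $\phi(t)=H_{\nu-1}(t,\rho(a))$. By Theorem \ref{Ah} applied with base $\rho(a)$ and Lemma \ref{Power} (the power rule), $\sum_{s=a}^{t}H_{-\nu-1}(t,\rho(s))\phi(s)=\bigl(\nabla^{\nu}_{\rho(a)}H_{\nu-1}(\cdot,\rho(a))\bigr)(t)=H_{-1}(t,\rho(a))=0$, since $H_{-1}\equiv 0$. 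Isolating the $s=t$ term (whose coefficient is $1$) turns this into the exact identity $\phi(t)=\sum_{s=a}^{t-1}\bigl(-H_{-\nu-1}(t,\rho(s))\bigr)\phi(s)$; that is, $\phi$ saturates the inequality above. Because $\phi(s)=H_{\nu-1}(s,\rho(a))>0$ for $s\ge a$ and $\phi(a)=1=|v(a)|$, a strong induction on $t$ (the weights being nonnegative) then gives $|v(t)|\le\phi(t)=H_{\nu-1}(t,\rho(a))$ for every $t\in\mathbb{N}_{a}$. Finally \eqref{As}, with $\mu=\nu\in(0,1)$, forces $\phi(t)\to 0$, whence $|u(t)|\le|u(a)|\,H_{\nu-1}(t,\rho(a))\to 0$.

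The main obstacle — and the only genuinely nonroutine step — is guessing the envelope $\phi(t)=H_{\nu-1}(t,\rho(a))$ and verifying that it satisfies the governing identity \emph{exactly}; this is what allows the induction to close with no loss, and it is essentially why a mere weighted-average bound (which only yields boundedness, the running maximum being nonincreasing) can be upgraded to genuine decay. The two supporting computations, namely the positivity of $-H_{-\nu-1}(t,\rho(s))$ and the evaluation $-H_{-\nu-1}(t,\rho(t-1))=\nu$ that pins down the precise form of the hypothesis $|c(t)+\nu|\le\nu$, are short gamma-function manipulations. The only points worth double-checking are that Lemma \ref{Power} and Theorem \ref{Ah}, though stated with base $a$, apply verbatim with base $\rho(a)$, and that \eqref{As} is insensitive to the shift of base from $a$ to $\rho(a)$; both are immediate.
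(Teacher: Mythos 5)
Your proposal is correct and follows essentially the same route as the paper: reduce to bounding $\tilde{E}_{c,\nu}(t,a)$, peel off the $s=t-1$ term whose coefficient is exactly $\nu$, use positivity of $-H_{-\nu-1}(t,\rho(s))$ together with $|c(t)+\nu|\le\nu$, and close a strong induction against the envelope $H_{\nu-1}(t,\rho(a))$ via the power rule identity $\nabla^{\nu}_{\rho(a)}H_{\nu-1}(\cdot,\rho(a))=H_{-1}\equiv 0$, finishing with \eqref{As}. Your formulation of the envelope's exact saturation of the convolution identity is a slightly cleaner packaging of the telescoping computation the paper carries out inside its induction step, but the mathematics is identical.
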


\begin{proof}
We show that 
\begin{equation} \label{AA}
\left|\tilde{E}_{c, \nu}(t, a)\right| \leq  H_{\nu - 1}(t, \rho(a)), \quad t \in \mathbb{N}_{a + 1}.
\end{equation}
Take $t = a + n$, $n \in \mathbb{N}_{1}$. From \eqref{AA}, it is enough to prove that the statement 
\begin{equation} \label{AB}
\left|\tilde{E}_{c, \nu}(a + n, a)\right| \leq  H_{\nu - 1}(a + n, \rho(a)), \quad n \in \mathbb{N}_{1},
\end{equation}
is true by using the principle of strong mathematical induction on $n$. For $n = 1$, from \eqref{E}, we have
\begin{align*}
\left|\tilde{E}_{c, \nu}(a + 1, a)\right| & = \left|c(a + 1) \tilde{E}_{c, \nu}(a, a) - \sum^{a}_{s = a}H_{-\nu - 1}(a + 1, \rho(s))\tilde{E}_{c, \nu}(s, a)\right| \\ & = \left|(c(a + 1) + \nu) \tilde{E}_{c, \nu}(a, a)\right| \\ & = \left|c(a + 1) + \nu\right| \\ & \leq \nu = H_{\nu - 1}(a + 1, \rho(a)).
\end{align*}
Thus, the statement \eqref{AB} is true for $n = 1$. For $n = 2$, from \eqref{E}, we have
\begin{align*}
\left|\tilde{E}_{c, \nu}(a + 2, a)\right| & = \left|c(a + 2) \tilde{E}_{c, \nu}(a + 1, a) - \sum^{a + 1}_{s = a}H_{-\nu - 1}(a + 2, \rho(s))\tilde{E}_{c, \nu}(s, a)\right| \\ & = \left|c(a + 2) \tilde{E}_{c, \nu}(a + 1, a) + \frac{\nu (1 - \nu)}{2}\tilde{E}_{c, \nu}(a, a) + \nu \tilde{E}_{c, \nu}(a + 1, a)\right| \\ & = \left|(c(a + 2) + \nu) \tilde{E}_{c, \nu}(a + 1, a) + \frac{\nu (1 - \nu)}{2}\right| \\ & \leq \left|c(a + 2) + \nu \right| \left|\tilde{E}_{c, \nu}(a + 1, a)\right| + \frac{\nu (1 - \nu)}{2} \\ & \leq \nu^2 + \frac{\nu (1 - \nu)}{2} \\ & = \frac{\nu (\nu + 1)}{2} = H_{\nu - 1}(a + 2, \rho(a)).
\end{align*}
Thus, the statement \eqref{AB} is true for $n = 2$.  Assume the statement \eqref{AB} is true for $k = 3, 4, \cdots, n - 1$. That is, 
\begin{equation} \label{Sol B}
\left|\tilde{E}_{c, \nu}(a + k, a)\right| \leq H_{\nu - 1}(a + k, \rho(a)), \quad k \in \mathbb{N}^{n - 1}_{1}.
\end{equation}
Now, we prove the statement \eqref{AB} is true for $n$. Consider
\begin{align} \label{L}
\nonumber & \left|\tilde{E}_{c, \nu}(a + n, a)\right| \\ \nonumber & = \left|c(a + n) \tilde{E}_{c, \nu}(a + n - 1, a) - \sum^{a + n - 1}_{s = a}H_{-\nu - 1}(a + n, \rho(s))\tilde{E}_{c, \nu}(s, a)\right| \\ & = \left|c(a + n) \tilde{E}_{c, \nu}(a + n - 1, a) - \sum^{n - 1}_{k = 0}H_{-\nu - 1}(a + n, \rho(a + k))\tilde{E}_{c, \nu}(a + k, a)\right|.
\end{align}
Observe that 
\begin{align*}
- H_{-\nu - 1}(a + n, \rho(s)) & = - \frac{1}{\Gamma(-\nu)}(a + n - s + 1)^{\overline{-\nu - 1}} \\ & = - \frac{\Gamma(a + n - s - \nu)}{\Gamma(a + n - s + 1)\Gamma(-\nu)} \\ & = \nu  \frac{\Gamma(a + n - s - \nu)}{\Gamma(a + n - s + 1)\Gamma(1 - \nu)}
\end{align*}
Since $0 < \nu < 1$ and $a \leq s \leq a + n - 1$, we have $1 - \nu > 0$, $a + n - s + 1 > 0$ and $a + n - s - \nu > 0$, implying that $$- H_{-\nu - 1}(a + n, \rho(s)) > 0.$$ Then, from \eqref{L} and \eqref{Sol B}, we have
\begin{align} 
\nonumber & \left|\tilde{E}_{c, \nu}(a + n, a)\right| \\ \nonumber & = \Big{|}c(a + n) \tilde{E}_{c, \nu}(a + n - 1, a) + \nu \tilde{E}_{c, \nu}(a + n - 1, a) \\ \nonumber & \quad - \sum^{n - 2}_{k = 0}H_{-\nu - 1}(n, \rho(k))\tilde{E}_{c, \nu}(a + k, a)\Big{|} \\ \nonumber & = \left|(c(a + n) + \nu) \tilde{E}_{c, \nu}(a + n - 1, a) - \sum^{n - 2}_{k = 0}H_{-\nu - 1}(n, \rho(k))\tilde{E}_{c, \nu}(a + k, a)\right| \\ \nonumber & \leq \left|c(a + n) + \nu\right| \left|\tilde{E}_{c, \nu}(a + n - 1, a)\right| - \sum^{n - 2}_{k = 0}H_{-\nu - 1}(n, \rho(k)) \left|\tilde{E}_{c, \nu}(a + k, a)\right| \\ \nonumber & \leq \nu H_{\nu - 1}(a + n - 1, \rho(a)) - \sum^{n - 2}_{k = 0}H_{-\nu - 1}(n, \rho(k))H_{\nu - 1}(a + k, \rho(a)) \\ \nonumber & = H_{-\nu - 1}(n, \rho(n - 1)) H_{\nu - 1}(a + n - 1, \rho(a)) \\ \nonumber & \quad - \sum^{n - 2}_{k = 0}H_{-\nu - 1}(n, \rho(k))H_{\nu - 1}(a + k, \rho(a)) \\ \nonumber & = - \sum^{n - 1}_{k = 0}H_{-\nu - 1}(n, \rho(k))H_{\nu - 1}(a + k, \rho(a)) \\ \nonumber & = - \sum^{n}_{k = 0}H_{-\nu - 1}(n, \rho(k))H_{\nu - 1}(a + k, \rho(a)) + H_{-\nu - 1}(n, \rho(n))H_{\nu - 1}(a + n, \rho(a)) \\ \nonumber & = - \sum^{n}_{k = 0}H_{-\nu - 1}(n, \rho(k))H_{\nu - 1}(k, \rho(0)) + H_{\nu - 1}(a + n, \rho(a)) \\ \nonumber & = - \nabla^{\nu}_{\rho(0)}H_{\nu - 1}(n, \rho(0)) + H_{\nu - 1}(a + n, \rho(a)) \\ \nonumber & = - H_{\nu - \nu - 1}(n, \rho(0)) + H_{\nu - 1}(a + n, \rho(a)) \quad \text{(By Lemma \ref{Power})}\\ \nonumber & = - 0 + H_{\nu - 1}(a + n, \rho(a)) = H_{\nu - 1}(a + n, \rho(a)).
\end{align}
Hence, by the principle of strong mathematical induction on $n$, the statement \eqref{AB} is true for each $n \in \mathbb{N}_{1}$. Thus, we have \eqref{AA}. From Theorem \ref{T 1} and \eqref{As}, it follows that $$0 \leq |u(t)|  = |u(a)| |\tilde{E}_{c, \nu}(t, a)| \leq |u(a)| \left|H_{\nu - 1}(t, \rho(a))\right| \rightarrow 0 ~ \text{as} ~ t \rightarrow \infty.$$ The proof is complete.
\end{proof}

\section{Conclusion}
Assume $0 < \nu < 1$, $c : \mathbb{N}_{a + 1} \rightarrow \mathbb{R}$. Consider the first order and the $\nu^{\text{th}}$-order nabla difference equations
\begin{align}
& \big{(}\nabla u\big{)}(t) = c(t) u(t), \quad t \in \mathbb{N}_{a + 1}, \label{D 1} \\
& \big{(}\nabla^{\nu}_{\rho(a)} u\big{)}(t) = c(t) u(t), \quad t \in \mathbb{N}_{a + 1}. \label{D 2}
\end{align} 
From \cite{Bo, El}, we know that every solution of \eqref{D 1} tends to $0$ as $t \rightarrow \infty$ if $$\lim_{t \rightarrow \infty}\left|\prod^{t}_{s = a + 1}\left[1 + c(s)\right]\right| = 0.$$ It follows from Theorem \ref{T 2} that every solution of \eqref{D 2} tends to 0 as $t \rightarrow \infty$ if $\left|c(t) + \nu\right| \leq \nu$ for $t \in \mathbb{N}_{a + 1}$. For example, if we take $c(t) = 0$ for $t \in \mathbb{N}_{1}$, every solution of $$\big{(}\nabla u\big{)}(t) = 0, \quad t \in \mathbb{N}_{1},$$ doesn't tends to zero as $t \rightarrow \infty$ whereas every solution of $$\big{(}\nabla^{\nu}_{\rho(0)} u\big{)}(t) = 0, \quad t \in \mathbb{N}_{1},$$ tends to 0 as $t \rightarrow \infty$.

\end{document}